       \font\tenmsb=msbm10
       \font\sevenmsb=msbm7
       \font\fivemsb=msbm5
\let\amstexloaded@\relax\fi
       \def\spaces@{\space\space\space\space\space}
       \def\spaces@@{\spaces@\spaces@\spaces@\spaces@\spaces@}
       \def\space@.  {\futurelet\space@\relax}
       \def\Err@#1{\errhelp\defaulthelp@\errmessage{AmS-TeX error: #1}}
       \def\relaxnext@{\let\next\relax}
       \def\accentfam@{7}
       \def\noaccents@{\def\accentfam@{0}}
       \def\Cal{\relaxnext@\ifmmode\let\next\Cal@\else
       \def\next{\Err@{Use \string\Cal\space only in math mode}}\fi\next}
       \def\Cal@#1{{\Cal@@{#1}}}
       \def\Cal@@#1{\noaccents@\fam\tw@#1}
       \def\Bbb{\relaxnext@\ifmmode\let\next\Bbb@\else
       \def\next{\Err@{Use \string\Bbb\space only in math mode}}\fi\next}
       \def\Bbb@#1{{\Bbb@@{#1}}}
       \def\Bbb@@#1{\noaccents@\fam\msbfam#1}
\numberwithin{equation}{section}
\newtheorem{thm}{Theorem}[section]
\newtheorem{lem}[thm]{Lemma}
\newtheorem{iteration lemma}[thm]{iteration Lemma}
\newtheorem{defn}[thm]{Definition}
\newtheorem{eg}[thm]{Example}
\newtheorem*{con*}{Concluding remarks}
\newtheorem*{acknowledgements*}{ACKNOWLEDGEMENTS}
\begin{document}

\setlength{\columnsep}{5pt}
\title{\bf Fuzzy linear systems and core-EP inverses}
\author{\ Yuefeng Gao \footnote{Corresponding author. E-mail: yfgao91@163.com }, \ Jing Li \footnote{ E-mail: jingli0204@163.com},  \\
 College of Science, University of Shanghai for Science and Technology\\ Shanghai 200093,  China}
\date{}
\maketitle
\begin{quote}
\textbf {Abstract:}  {\small The main purpose of this paper is to provide a solution of the consistent fuzzy linear system and a generalized solution of the inconsistent fuzzy linear system involving the core-EP inverse of an associated matrix. 
Before this can be achieved, it is necessary to study the block structure of the core-EP inverse. Finally, results are illustrated with some numerical examples.}\\
\textbf {Keywords:} {\small core-EP inverse; fuzzy linear system; block structure}\\
\textbf {AMS Subject Classifications:} {08A72; 15A09; 65F05}
\end{quote}
\section{Introduction}
\label{intro}
\quad \quad The field of fuzzy linear systems have been developed rapidly since the appearance of fuzzy numbers  in \cite{Fuzzy number}. In many applications, the system's parameters are represented by fuzzy numbers rather than crisp numbers, so it is important to solve fuzzy linear systems. There are numerous papers devoted to the investigation of solutions of fuzzy linear systems \cite{{Numerical solution}, {Inconsistent fuzzy matrix equations}, {W-weighted Drazin}}. 

Block structures of generalized inverses such as the Moore-Penrose inverse, group inverse, $W$-weighted Drazin inverse and core inverse contributed to solving fuzzy linear systems, see \cite{M_P inverse solve, group inverse solve, core inverse solve} for example. More specifically, the method of using the Moore-Penrose inverse to solve fuzzy linear systems is firstly presented in \cite{M_P inverse solve}. Next, B. Mihailovi$\rm \acute c$ et al. \cite{group inverse solve}  got general solutions of fuzzy linear systems using the block structures of group inverses. M. Nikuie and M.Z. Ahmad \cite{W-weighted Drazin} explained the effect of $W$-weighted Drazin inverses in solving singular fuzzy linear systems. Recently, H. Jiang et al. \cite{core inverse solve} gave a method for solving fuzzy linear systems using the block structures of core inverses.
 It is well known that the core-EP inverse introduced by  K.M. Prasad and K.S. Mohana in \cite{core-EP inverse} is a generalization of the core inverse and it can be calculated by core-EP decomposition in \cite{Core-EP decomposition}.
It makes sense that solving fuzzy linear systems can be broadened via using core-EP inverses.

The paper is organized as follows: in Section 2, the definitions of fuzzy linear systems and several types of generalized inverses have arisen. In addition, we present an algorithm for computing the core-EP inverse.
 In Section 3,  we shall focus our attention on the block structure of the core-EP inverse of an associated matrix, which affords a better insight into getting a solution of the fuzzy linear system. 
 In Section 4, We divide the problems into consistent and inconsistent fuzzy linear systems. Based on the obtained theoretical results, methods for obtaining a solution of the fuzzy linear system are proposed. In Section 5, numerical examples about dealing with consistent and inconsistent fuzzy linear systems are implemented.
\section{Preliminaries}
\label{sec:1}
\quad \quad In this section, we shall review some notations, definitions and results which play important roles in the rest sections.
\subsection{Fuzzy number and fuzzy linear system}
\label{sec:2}
\qquad Brief definitions and theorems related to fuzzy numbers and fuzzy linear systems are given in this part. The notation $X^T$ means the transpose of a vector $X$ in the following content.
\begin{defn} \cite{Fuzzy Set Theory}
\label{fuzzy number}
We represent an arbitrary fuzzy number $\tilde{z}(r)$, in parametric form, by an ordered pair of functions $\tilde{z}(r)=(\underline{z}(r), \overline{z}(r))$, $r \in[0,1]$, which satisfies the following requirements:\\
{\normalsize\textcircled{\footnotesize{1}}}
$\underline{z}(r)$ is a bounded left continuous nondecreasing function over $[0,1]$,\\
{\normalsize\textcircled{\footnotesize{2}}} 
$\overline{z}(r)$ is a bounded left continuous nonincreasing function over $[0,1]$,\\
{\normalsize\textcircled{\footnotesize{3}}}
$\underline{z}(r) \leq \overline{z}(r)$.
\end{defn}
For each  real number $\lambda,$ the scalar multiplication and the addition of fuzzy numbers can be described as follows:\\
{\normalsize\textcircled{\footnotesize{1}}} $\tilde{z} (r)+\tilde{w}(r)=(\ \underline{z}(r)+\ \underline{w}(r), \ \overline{z}(r)+\ \overline{w}(r)) ,$\\
{\normalsize\textcircled{\footnotesize{2}}} $\lambda \tilde{z}(r)=\left\{\begin{array}{ll}
{( \lambda \underline{z}(r), \lambda \overline{z}(r) ),} & \lambda \geq 0, \\
{( \lambda \overline{z}(r), \lambda \underline{z}(r) ),} & \lambda<0,
\end{array}\right.$\\
{\normalsize\textcircled{\footnotesize{3}}} $\tilde{z}(r)=\tilde{w}(r)$ if and only if $\ \underline{z}(r)=\ \underline{w}(r)$ and $\ \overline{z}(r)=\ \overline{w}(r).$\\
\begin{defn} \cite{Fuzzy linear systems}
The fuzzy linear matrix system $A \tilde{X}=\tilde{Y}$
\begin{equation}\label{FLS}
\left[\begin{array}{cccc}
a_{11} & a_{12} & \cdots & a_{1 n} \\
a_{21} & a_{22} & \cdots & a_{2 n} \\
\cdots & \cdots & \cdots & \cdots \\
a_{n 1} & a_{n 2} & \cdots & a_{n n}
\end{array}\right]\left[\begin{array}{c}
\tilde{x}_{1}\\
\tilde{x}_{2} \\
\cdots \\
\tilde{x}_{n}
\end{array}\right]=\left[\begin{array}{c}
\tilde{y}_{1} \\
\tilde{y}_{2}\\
\cdots \\
\tilde{y}_{n}
\end{array}\right],
\end{equation}
where the matrix $A=\left[a_{i j}\right]$ is a real matrix and $\tilde{x}_{i} , \tilde{y}_{i}$, $i=1,...,n$, are fuzzy numbers, is called a fuzzy linear system (FLS).
\end{defn}
\begin{defn} \cite{Fuzzy linear systems}
A fuzzy number vector $\tilde{X}(r)=\begin{bmatrix}
\tilde{x}_{1}(r), \tilde{x}_{2}(r),\cdots, \tilde{x}_{n}(r)
\end{bmatrix}
^{\rm T}$, where
$$\tilde{x}_{i}(r)=(\ \underline{x}_{i}(r), ~\ \overline{x}_{i}(r)), ~i=1,...,n,~r \in[0,1],$$
is a solution of FLS (\ref{FLS}) if it satisfies
$$\left\{\begin{array}{l}
\sum_{j=1}^{n} {a}_{i j}\ \overline{x}_{j}(r)=\ \overline{y}_{i}(r)\\
\sum_{j=1}^{n} {a}_{i j}\ \underline{x}_{j}(r)=\ \underline{y}_{i}(r)
\end{array}\right. i=1,\cdots,n.
$$\end{defn}
\indent An significant fact was noted in \cite{Fuzzy linear systems}, in order to get a solution of the FLS $A \tilde{X}=\tilde{Y}$ (\ref{FLS}), it is sufficient to solve the following crisp linear system:\\
\begin{equation}\label{SX=Y}
S X(r)=Y(r),~r \in[0,1], \end{equation}
 i.e. $$\left[\begin{array}{cccc}s_{11} & s_{12} & \cdots & s_{1,2 n} \\s_{21} & s_{12} & \cdots & s_{2,2 n} \\\vdots & \vdots & \ddots & \vdots \\s_{2 n, 1} & s_{2 n, 2} & \cdots & s_{2 n, 2 n}\end{array}\right]\left[\begin{array}{l}\ \underline{x}_{1 }(r) \\\vdots \\\underline{x}_{n }(r) \\-\ \overline{x}_{1 }(r) \\\vdots \\-\ \overline{x}_{n }(r)\end{array}\right]=\left[\begin{array}{l}\underline{y}_{1 }(r) \\\vdots \\\underline{y}_{n }(r) \\-\ \overline{y}_{1 }(r) \\\vdots \\-\ \overline{y}_{n }(r)\end{array}\right],$$
 where $s_{ij}$ are determined as follows:\\ 
 $$
\begin{array}{l}
a_{i j} \geqslant 0 \Rightarrow s_{i j} =a_{i j},\quad s_{i+n, j+n}=a_{i j}. \\
a_{i j}<0 \Rightarrow  s_{i, j+n}=-a_{i j},\quad s_{i+n, j}=-a_{i j}.
\end{array}
$$and all the remaining $s_{ij}$ are taken zero.\\
\indent We use the next notation to represent the structure of $S$: $$S=\left[\begin{array}{ll}D & E \\ E &D \end{array}\right],$$ where $D$ and $E$ are $n\times n $ matrices, $D=\left[a_{i j}^{+}\right]$, $E=\left[a_{i j}^{-}\right]$, $a_{i j}^{+}=a_{i j} \vee 0$ and $a_{i j}^{-}=-a_{i j} \vee 0$. In this case, $S$ is called the associated matrix of $A$. We observe that $A=A^{+}-A^{-}=D-E$ and $|A|=A^{+}+A^{-}=D+E$.\\ 
\indent According to \cite{Inconsistent fuzzy matrix equations}, we get the following conclusions:\\
1. $\operatorname{rank}(S)<\operatorname{rank}(S \mid Y)$, $SX=Y$ does not have any solution, $A \tilde{X}=\tilde{Y}$ is called an inconsistent FLS.\\
2. $\operatorname{rank}(S)=\operatorname{rank}(S \mid Y)$, $SX=Y$  has a solution, $A \tilde{X}=\tilde{Y}$ is called a consistent FLS. Furthermore, 

(i) $\operatorname{rank}(S)=\operatorname{rank}(S \mid Y)=2n,$ $SX=Y$ has the unique solution;

(ii) $\operatorname{rank}(S)=\operatorname{rank}(S \mid Y)<2n,$ $SX=Y$ has infinite solutions.
\subsection{ core-EP inverse}
\label{sec:3}
\quad \quad While the original core inverse is restricted to matrices of index one, the core-EP inverse exists for any square matrices. That is to say, it extends the core inverse of a matrix from index one to an arbitrary index. We begin with recalling some related definitions. As usual, $A^{*}$ denotes the transpose of the matrix $A$. The index of matrix $A \in \mathbb{R}^{n\times n},$ denoted by $ind(A)=k$, is the smallest nonnegative integer $k$ such that $\operatorname{rank}(A^{k+1})=\operatorname{rank}(A^{k}).$

Given $A\in \mathbb{R}^{n \times n}$, the matrix $X$ satisfying one or more of the following matrix equations has been studied extensively.
\begin{equation*}
\begin{aligned}
&(1) ~A X A=A;
\qquad(1^{k})~ XA^{k+1}=A^{k};\\
&(2)~ X A X=X;\qquad (2^{\prime})~ A X^{2}=X;\\
&(3) ~(A X)^{*}=A X;\quad(4) ~(X A)^{*}=X A.
\end{aligned}
\end{equation*}

\begin{defn} 
\label{defn}
For a matrix $A\in \mathbb{R}^{n \times n}$ with index  $k$.\\
$(\rm i)$ \cite{M P inerse} 
$X$ is the Moore-Penrose inverse of $A$ if and only if $X$ satisfies $(1)$, $(2)$, $(3)$, $(4)$, denoted by $X=A^{\dagger}$.\\
$(\rm ii)$  \cite{core inverse} 
$X$ is the core inverse of $A$ if and only if $X$ satisfies $(1)$, $(2)^{\prime}$, $(3)$, denoted by $X=A^{\scriptsize\textcircled{\tiny\#}}$.\\
$(\rm iii)$ \cite{core-EP inverse} 
$X$ is the core-EP inverse of $A$ if and only if $X$ satisfies $(1^k)$, $(2)^{\prime},$ $(3)$, denoted by $X=A^{\scriptsize\textcircled{\tiny\dag}}$.
\end{defn}
Here, we describe a method for computing the core-EP inverse by applying the core-EP decomposition \cite{Core-EP decomposition}. Let  $A \in \mathbb{R}^{n \times n},$ $ind(A)=k,$ then there exists the unitary matrix $U$ such that
 $$A=U\left[\begin{array}{ll}T & S \\ 0 & N\end{array}\right] \boldsymbol{U}^{*},$$ 
 where $T$ is nonsingular and $N^k=0$. The core-EP inverse of $A$ has the following form:
\begin{equation}
\label{core-EP inverse µ}
\boldsymbol{A}^{\scriptsize\textcircled{\tiny\dag}}=\boldsymbol{U}\left[\begin{array}{cc}\boldsymbol{T}^{-1} & 0 \\ 0 & 0\end{array}\right] \boldsymbol{U}^{*}.
\end{equation}
We present an algorithm for computing the core-EP inverse of $A,$ when $ind(A)=k,$ as follows:
\begin{table}[H]\caption{Computation of the core-EP inverse}\label{tab:1}       \begin{tabular}{lll}\hline\textbf{Algorithm:} Computation of ${A}^{\scriptsize\textcircled{\tiny\dag}}$: $ind(A)=k$~~~~~~~~~~~~~~~~~~~~~~~~~~~~~~~~~\\\hline1. Input: $A$ is a $n \times n$ matrix;\\
2. Calculate sub-matrices $U$, $T,$ $S$ and $N$;\\
3. Calculate  matrix $T^{-1}$;\\
4. Determine the core-EP inverse using (\ref{core-EP inverse µ}). \\\hline\end{tabular}\end{table}
It is worth mentioning that the core-EP inverse ${A}^{\scriptsize\textcircled{\tiny\dag}}$ can also be expressed as (see \cite{core-EP inverse}):
 \begin{equation}
 \label{core-EP inverse jisuan}
A^{\scriptsize\textcircled{\tiny\dag}}=A^{k}[(A^{*})^{k} A^{k+1}]^{\dagger} (A^{*})^{k}.
\end{equation}
\section{Block structure of core-EP inverse of the associated matrix $S$}
\quad\quad The block structure of $S^{\scriptsize\textcircled{\tiny\dag}}$, which we shall consider in this part, is a powerful tool in solving FLS. Provided that the structure of the associated matrix $S$ is $\left[\begin{array}{ll}
D & E \\
E & D
\end{array}\right],$ where $D, E\in \mathbb{R}^{n \times n}$.
\begin{lem}
\label{An}
Let $S$ be an arbitrary $ 2n\times 2n $  matrix with the form\\
$$S=\left[\begin{array}{ll}
D & E \\
E & D
\end{array}\right],$$
where both $D$ and $E$  are $ n\times n $ matrices, then
$$S^{n}=\left[\begin{array}{cc}
\frac{1}{2}(D+E)^{n}+\frac{1}{2}(D-E)^{n} & \frac{1}{2}(D+E)^{n}-\frac{1}{2}(D-E)^{n}\\
\frac{1}{2}(D+E)^{n}-\frac{1}{2}(D-E)^{n} & \frac{1}{2}(D+E)^{n}+\frac{1}{2}(D-E)^{n}
\end{array}\right].$$
\end{lem}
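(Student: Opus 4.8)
The plan is to block-diagonalize $S$ by a single similarity transformation and then raise the diagonal blocks to the $n$-th power. Concretely, I would introduce the $2n\times 2n$ matrix
$$P=\begin{bmatrix} I_n & I_n \\ I_n & -I_n \end{bmatrix},$$
and first record that $P^{2}=2I_{2n}$, so that $P$ is invertible with $P^{-1}=\tfrac12 P$. This matrix is the natural change of basis adapted to the block-circulant pattern $\begin{bmatrix} D & E \\ E & D \end{bmatrix}$, and it is exactly what turns the coupling between the two $n\times n$ blocks into the combinations $D+E$ and $D-E$.

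The key step is to compute $P^{-1}SP$ by two block multiplications. One checks that
$$SP=\begin{bmatrix} D+E & D-E \\ D+E & -(D-E) \end{bmatrix},$$
and then that
$$P^{-1}SP=\begin{bmatrix} D+E & 0 \\ 0 & D-E \end{bmatrix},$$
the off-diagonal blocks cancelling precisely because of the signs in $P$. At this point all the structure of the problem has been extracted: $S$ is similar to the block-diagonal matrix whose diagonal blocks are $D+E$ and $D-E$.

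From here the conclusion is immediate. Since taking powers commutes with conjugation, $S^{n}=P\,\mathrm{diag}\big((D+E)^{n},(D-E)^{n}\big)\,P^{-1}$, and expanding this product (again two block multiplications, using $P^{-1}=\tfrac12P$) yields the four blocks $\tfrac12(D+E)^{n}\pm\tfrac12(D-E)^{n}$ arranged in exactly the stated pattern.

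The computations are entirely routine $2\times 2$ block-matrix products; the only thing that requires care is tracking the signs so that the off-diagonal blocks of $P^{-1}SP$ genuinely vanish. An alternative, equally short route that avoids $P$ is induction on $n$: assuming the displayed form for $S^{n}$, one multiplies by $S$ on the right and collapses the resulting sums via $(D\pm E)^{n}(D\pm E)=(D\pm E)^{n+1}$, reproducing the same four blocks at stage $n+1$. I would present the conjugation argument as the main proof, since it explains why the formula holds rather than merely verifying it, and reserve induction as a remark.
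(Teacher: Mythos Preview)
Your proof is correct, and your main argument---block-diagonalizing $S$ via the similarity $P^{-1}SP=\mathrm{diag}(D+E,\,D-E)$ with $P=\begin{bmatrix} I_n & I_n \\ I_n & -I_n \end{bmatrix}$---is genuinely different from the paper's route. The paper proceeds by the induction you mention only as a remark: it checks the base case by direct computation, assumes the formula at stage $k$, multiplies on the left by $S$, and collapses the block products using $(D\pm E)(D\pm E)^{k}=(D\pm E)^{k+1}$. Your conjugation argument is more explanatory, since it identifies the structural reason (block-circulant symmetry) behind the appearance of $D+E$ and $D-E$ and yields the result for all $n$ in one stroke; the paper's induction is a bare verification but has the minor virtue of needing no auxiliary matrix or invertibility check. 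Either presentation is entirely adequate here.
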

\begin{proof}
This result can be proved by mathematical induction.\\
\indent Firstly, \\
\begin{equation*}
\begin{aligned}
S^{2}&=\left[\begin{array}{ll}D^{2}+E^{2} & D E+E D \\ D E+E D & D^{2}+E^{2}\end{array}\right]\\&=
\left[\begin{array}{ll}\frac{1}{2}(D+E)^{2}+\frac{1}{2}(D-E)^{2} & \frac{1}{2}(D+E)^{2}-\frac{1}{2}(D-E)^{2} \\
\frac{1}{2}(D+E)^{2}-\frac{1}{2}(D-E)^{2}  & \frac{1}{2}(D+E)^{2}+\frac{1}{2}(D-E)^{2} \end{array}\right].
\end{aligned}
\end{equation*}
If the situation of $n=k$ is true, then we have
$$\begin{aligned}
S^{k} &=\left[\begin{array}{cc}
\frac{1}{2}(D+E)^{k}+\frac{1}{2}(D-E)^{k} & \frac{1}{2}(D+E)^{k}-\frac{1}{2}(D-E)^{k} \\
\frac{1}{2}(D+E)^{k}-\frac{1}{2}(D-E)^{k} & \frac{1}{2}(D+E)^{k}+\frac{1}{2}(D-E)^{k}
\end{array}\right].
\end{aligned}$$
When $n=k+1$,
$$\begin{aligned}S^{k+1}=\left[\begin{array}{ll}D & E \\ E & D\end{array}\right]\left[\begin{array}{cc}\frac{1}{2}(D+E)^{k}+\frac{1}{2}(D-E)^{k} & \frac{1}{2}(D+E)^{k}-\frac{1}{2}(D-E)^{k} \\
\frac{1}{2}(D+E)^{k}-\frac{1}{2}(D-E)^{k} & \frac{1}{2}(D+E)^{k}+\frac{1}{2}(D-E)^{k}\end{array}\right]\\=\left[\begin{array}{ll}\frac{1}{2}(D+E)^{k+1}+\frac{1}{2}\left(D-E\right)^{k+1} &\frac{1}{2}(D+E)^{k+1} -\frac{1}{2}(D-E)^{k+1} \\ \frac{1}{2}(D+E)^{k+1}-\frac{1}{2}(D-E)^{k+1} & \frac{1}{2}(D+E)^{k+1}+\frac{1}{2}(D-E)^{k+1}\end{array}\right].\end{aligned}$$
It completes the proof.
\end{proof}
We deal with the block structure of the core-EP inverse in the next statement.
\begin{thm}
\label{the block structure of core-EP inverse}
Let $A$ be the coefficient matrix of FLS and $S$ be its associated matrix. The core-EP inverse $S^{\scriptsize\textcircled{\tiny\dag}}$ of $S$ is
\begin{equation}
\label{core-ep de jiegou }
S^{\scriptsize\textcircled{\tiny\dag}}=\left[\begin{array}{ll}
H & Z \\
Z & H
\end{array}\right]
\end{equation} 
\label{core-ep inverse }
if and only if $$\begin{array}{l}
H=\frac{1}{2}\left[(D+E)^{\scriptsize\textcircled{\tiny\dag}}+(D-E)^{\scriptsize\textcircled{\tiny\dag}}\right], ~Z=\frac{1}{2}\left[(D+E)^{\scriptsize\textcircled{\tiny\dag}}-(D-E)^{\scriptsize\textcircled{\tiny\dag}}\right]
\end{array}.$$
\end{thm}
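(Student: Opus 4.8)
The plan is to reduce the problem to the two smaller matrices $D+E$ and $D-E$ by a single fixed unitary change of basis that block-diagonalizes $S$. Introduce the symmetric orthogonal matrix
$$P=\frac{1}{\sqrt{2}}\begin{bmatrix} I & I \\ I & -I \end{bmatrix},$$
where $I$ denotes the $n\times n$ identity. A direct multiplication gives $P=P^{*}=P^{-1}$ (so $P^{2}=I$) together with
$$PSP=\begin{bmatrix} D+E & 0 \\ 0 & D-E \end{bmatrix}=:M,$$
so that $S=PMP=PMP^{*}$ exhibits $S$ as unitarily similar to the block-diagonal matrix $M$.

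Next I would isolate two structural properties of the core-EP inverse. First, invariance under unitary similarity: if $S=QMQ^{*}$ with $Q$ unitary, then substituting the core-EP decomposition (\ref{core-EP inverse µ}) $M=U\bigl[\begin{smallmatrix}T & S'\\0 & N\end{smallmatrix}\bigr]U^{*}$ into $S$ shows that $QU$ is again unitary and delivers a core-EP decomposition of $S$; reading off (\ref{core-EP inverse µ}) then yields $S^{\scriptsize\textcircled{\tiny\dag}}=QM^{\scriptsize\textcircled{\tiny\dag}}Q^{*}$. Second, the core-EP inverse of a block-diagonal matrix is block-diagonal, namely $\mathrm{diag}(M_{1},M_{2})^{\scriptsize\textcircled{\tiny\dag}}=\mathrm{diag}(M_{1}^{\scriptsize\textcircled{\tiny\dag}},M_{2}^{\scriptsize\textcircled{\tiny\dag}})$, which I would confirm by checking the defining equations $(1^{k})$, $(2')$ and $(3)$ of Definition \ref{defn} blockwise.

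Combining the two properties with $Q=P$ and $M=\mathrm{diag}(D+E,D-E)$ gives
$$S^{\scriptsize\textcircled{\tiny\dag}}=P\begin{bmatrix}(D+E)^{\scriptsize\textcircled{\tiny\dag}} & 0 \\ 0 & (D-E)^{\scriptsize\textcircled{\tiny\dag}}\end{bmatrix}P,$$
and expanding this triple product (using $P^{2}=I$) produces exactly the symmetric block form (\ref{core-ep de jiegou }) with $H=\tfrac12[(D+E)^{\scriptsize\textcircled{\tiny\dag}}+(D-E)^{\scriptsize\textcircled{\tiny\dag}}]$ and $Z=\tfrac12[(D+E)^{\scriptsize\textcircled{\tiny\dag}}-(D-E)^{\scriptsize\textcircled{\tiny\dag}}]$. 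Both directions of the ``if and only if'' follow from the uniqueness of the core-EP inverse: the computation above is the ``if'' direction, and since $S^{\scriptsize\textcircled{\tiny\dag}}$ is a single well-defined matrix, any representation $\bigl[\begin{smallmatrix}H & Z\\Z & H\end{smallmatrix}\bigr]$ of it forces $H$ and $Z$ to equal the stated expressions.

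I expect the one genuinely delicate point to be the block-diagonal formula, specifically equation $(1^{k})$ when $D+E$ and $D-E$ have unequal indices. Writing $k=\mathrm{ind}(S)=\max\{\mathrm{ind}(D+E),\mathrm{ind}(D-E)\}$, I would start from the identity $M_{i}^{\scriptsize\textcircled{\tiny\dag}}M_{i}^{\,\mathrm{ind}(M_{i})+1}=M_{i}^{\,\mathrm{ind}(M_{i})}$ for each block and right-multiply by an appropriate power of $M_{i}$ to raise the exponent to $k+1$; equations $(2')$ and $(3)$ are immediate since they are preserved on each diagonal block. A more computational alternative would be to combine Lemma \ref{An} for $S^{k}$ with formula (\ref{core-EP inverse jisuan}), but the diagonalization route keeps the bookkeeping to a minimum.
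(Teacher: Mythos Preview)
Your proof is correct and takes a genuinely different route from the paper. The paper argues entirely at the level of the three defining equations $(1^{k})$, $(2')$, $(3)$: for the forward direction it expands $S^{\scriptsize\textcircled{\tiny\dag}}S^{k+1}=S^{k}$, $S(S^{\scriptsize\textcircled{\tiny\dag}})^{2}=S^{\scriptsize\textcircled{\tiny\dag}}$ and $(SS^{\scriptsize\textcircled{\tiny\dag}})^{*}=SS^{\scriptsize\textcircled{\tiny\dag}}$ in $2\times 2$ block form, invoking Lemma~\ref{An} for the first of these, and in each case extracts the sum-and-difference relations $(H\pm Z)=(D\pm E)^{(1^{k})}$, $(D\pm E)^{(2')}$, $(D\pm E)^{(3)}$; the backward direction reverses these computations. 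Your diagonalization via $P=\tfrac{1}{\sqrt{2}}\bigl[\begin{smallmatrix}I & I\\ I & -I\end{smallmatrix}\bigr]$ packages all three verifications into two structural facts (unitary covariance of $(\cdot)^{\scriptsize\textcircled{\tiny\dag}}$ and its behaviour on block-diagonal matrices), which makes Lemma~\ref{An} unnecessary and shows transparently why the same block pattern holds for any generalized inverse sharing those two properties. The paper's approach, by contrast, is entirely self-contained from Definition~\ref{defn} and does not need to establish those auxiliary facts separately. Your handling of the index mismatch in the block-diagonal step---right-multiplying $M_{i}^{\scriptsize\textcircled{\tiny\dag}}M_{i}^{\,\mathrm{ind}(M_{i})+1}=M_{i}^{\,\mathrm{ind}(M_{i})}$ by $M_{i}^{\,k-\mathrm{ind}(M_{i})}$---is exactly the right patch, and your appeal to uniqueness for the ``only if'' direction is legitimate since the block entries of a $2\times 2$ block matrix are determined by the matrix.
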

\begin{proof}
$(\Rightarrow)$
Firstly, since, as the concept of the core-EP inverse stated, $S^{\scriptsize\textcircled{\tiny\dag}}S^{k+1}=S^{k}$, \\
then,
$$\left[\begin{array}{ll}
H & Z \\
Z & H
\end{array}\right]\left[\begin{array}{ll}
D & E \\
E & D
\end{array}\right]^{k+1}=\left[\begin{array}{ll}
D & E \\
E & D
\end{array}\right]^{k}.$$
So,
$$\left[\begin{array}{ll}
H D+Z E & Z D+H E \\ 
Z D+H E & H D+Z E
\end{array}\right]\left[\begin{array}{ll}
D & E \\
E & D
\end{array}\right]^{k}\\=\left[\begin{array}{ll}
D & E \\
E & D
\end{array}\right]^{k}.$$
In view of Lemma \ref{An}, we claim that
$$\begin{array}{ll}
{\left[\begin{array}{cc}
H D+Z E & Z D+H E \\
Z D+H E & H D+Z E
\end{array}\right]\left[\begin{array}{cc}
\frac{1}{2}(D+E)^{k}+\frac{1}{2}(D-E)^{k} & \frac{1}{2}(D+E)^{k}-\frac{1}{2}(D-E)^{k} \\
\frac{1}{2}(D+E)^{k}-\frac{1}{2}(D-E)^{k} & \frac{1}{2}(D+E)^{k}+\frac{1}{2}(D-E)^{k}
\end{array}\right]} \\
=\left[\begin{array}{ll}
\frac{1}{2}(D+E)^{k}+\frac{1}{2}(D-E)^{k} & \frac{1}{2}(D+E)^{k}-\frac{1}{2}(D-E)^{k} \\
\frac{1}{2}(D+E)^{k}-\frac{1}{2}(D-E)^{k} & \frac{1}{2}(D+E)^{k}+\frac{1}{2}(D-E)^{k}
\end{array}\right], 
\end{array}$$
which guarantees,\\
$$\begin{aligned}
&(Z D+H E)[(D+E)^{k}+(D-E)^{k}]+(H D+Z E)[(D+E)^{k}-(D-E)^{k}]\\
&=(D+E)^{k}-(D-E)^{k},
\end{aligned}$$
$$\begin{aligned}
&(H D+Z E)[(D+E)^{k}+(D-E)^{k}]+(Z D+H E)[(D+E)^{k}-(D-E)^{k}]\\
&=(D+E)^{k}+(D-E)^{k}.
\end{aligned}$$
This amounts to the two formulas, 
$$(H+Z)(D+E)^{k+1}+(H-Z)(D-E)^{k+1}=(D+E)^{k}+(D-E)^{k},$$
$$(H+Z)(D+E)^{k+1}-(H-Z)(D-E)^{k+1}=(D+E)^{k}-(D-E)^{k}.$$
Clearly these mean,\\
$$(H+Z)(D+E)^{k+1}=(D+E)^{k}, ~(H-Z)(D-E)^{k+1}=(D-E)^{k},$$
i.e. $$H+Z=(D+E)^{(1^{k})} ,~ H-Z=(D-E)^{(1^{k})}.$$
Hence,
$$H=\frac{1}{2}\left[(D+E)^{(1^{k})}+(D-E)^{(1^{k})}\right] ,~ Z=\frac{1}{2}\left[(D+E)^{(1^{k})}-(D-E)^{(1^{k})}\right].$$

Secondly, from $S(S^{\scriptsize\textcircled{\tiny\dag}})^{2}=S^{\scriptsize\textcircled{\tiny\dag}},$
it follows that $$\left[\begin{array}{ll}
D & E \\
E & D
\end{array}\right]\left[\begin{array}{ll}
H & Z \\
Z & H
\end{array}\right]^{2}=\left[\begin{array}{ll}
H & Z \\
Z & H
\end{array}\right].$$
It gives\\
$$\left[\begin{array}{ll}
(D H + E Z) H+(D Z+E H) Z & (D H+E Z) Z+(D Z+E H)H  \\
(D H+E Z) Z+(D Z+E H)H &(D H + E Z) H+(D Z+E H) Z
\end{array}\right]=\left[\begin{array}{ll}
H & Z \\
Z & H
\end{array}\right].$$
Thus $$H=(D H+E Z) H+(D Z+E H) Z ,~ Z=(D H+E Z) Z+(D Z+E H) H.$$
This shows that, $H+Z=(D+E)(H+Z)^{2} ,~H-Z=(D-E)(H-Z)^{2},$\\
i.e. $$H+Z=(D+E)^{(2^{\prime})} ,~ H-Z=(D-E)^{(2^{\prime})}.$$
We conclude that
$$H=\frac{1}{2}\left[(D+E)^{(2^{\prime})}+(D-E)^{(2^{\prime})}\right] ,~ Z=\frac{1}{2}\left[(D+E)^{(2^{\prime})}-(D-E)^{(2^{\prime})}\right].$$
Lastly, according to $(S S^{\scriptsize\textcircled{\tiny\dag}})^{*}=S S^{\scriptsize\textcircled{\tiny\dag}},$
we obtain 
 $$\left[\begin{array}{ll}H & Z \\ Z & H\end{array}\right]^{*}\left[\begin{array}{ll}D & E \\ E & D\end{array}\right]^{*}=\left[\begin{array}{ll}D & E \\ E & D\end{array}\right]\left[\begin{array}{ll}H & Z \\ Z & H\end{array}\right].$$
It is not difficult to prove,
 $$(D H+E Z)^{*}=D H+E Z, ~ (D Z+E H)^{*}=D Z+E H .$$
So we have $${[(D+E)(H+Z)]^{*}=(D+E)(H+Z)},~ {[(D-E)(H-Z)]^{*}=(D-E)(H-Z)},$$
i.e.
$$ H+Z=(D+E)^{(3)},~ H-Z=(D-E)^{(3)}.$$
Clearly, 
$$H=\frac{1}{2}\left[(D+E)^{(3)}+(D-E)^{(3)}\right], ~Z=\frac{1}{2}\left[(D+E)^{(3)}-(D-E)^{(3)}\right].$$
To sum up,
$$H=\frac{1}{2}\left[(D+E)^{\scriptsize\textcircled{\tiny\dag}}+(D-E)^{\scriptsize\textcircled{\tiny\dag}}\right],~ Z=\frac{1}{2}\left[(D+E)^{\scriptsize\textcircled{\tiny\dag}}-(D-E)^{\scriptsize\textcircled{\tiny\dag}}\right].$$
$(\Leftarrow)$ 
Given
$$
H=\frac{1}{2}\left[(D+E)^{\scriptsize\textcircled{\tiny\dag}}+(D-E)^{\scriptsize\textcircled{\tiny\dag}}\right],~
Z=\frac{1}{2}\left[(D+E)^{\scriptsize\textcircled{\tiny\dag}}-(D-E)^{\scriptsize\textcircled{\tiny\dag}}\right],$$ 
it is immediate that  
\begin{equation*}
\begin{aligned}
&(D+E)(H+Z)^{2}=(H+Z), ~
(D-E)(H-Z)^{2}=(H-Z), \\
&[(D+E)(H+Z)]^{*}=(D+E)(H+Z), ~
[(D-E)(H-Z)]^{*}=(D-E)(H-Z),\\
&(H+Z)(D+E)^{k+1}=(D+E)^{k}, ~
(H-Z)(D-E)^{k+1}=(D-E)^{k}.
\end{aligned}
\end{equation*}
First of all, from $$
\begin{aligned}
H &=\frac{1}{2}(H+Z)+\frac{1}{2}(H-Z) \\
&=\frac{1}{2}(D+E)(H+Z)^{2}+\frac{1}{2}(D-E)(H-Z)^{2} \\
&=(D H+E Z) H+(D Z+E H) Z,
\end{aligned}$$
$$
\begin{aligned}
Z&=\frac{1}{2}(H+Z)-\frac{1}{2}(H-Z) \\
&=\frac{1}{2}(D+E)(H+Z)^{2}-\frac{1}{2}(D-E)(H-Z)^{2} \\
&=(D H+E Z) Z+(D Z+E H) H,
\end{aligned}
$$
it follows that$$
\left[\begin{array}{ll}
D & E \\
E & D
\end{array}\right]\left[\begin{array}{ll}
H & Z \\
Z & H
\end{array}\right]^{2}=\left[\begin{array}{ll}
H & Z \\
Z & H
\end{array}\right].$$
Secondly, note that \begin{align*}
D H+E Z &=\frac{1}{2}(D+E)(H+Z)+\frac{1}{2}(D-E)(H-Z) \\
&=\frac{1}{2}(H+Z)^{*}(D+E)^{*}+\frac{1}{2}(H-Z)^{*}(D-E)^{*} \\
&=(D H+E Z)^{*},\\
E H+D Z &=\frac{1}{2}(D+E)(H+Z)-\frac{1}{2}(D-E)(H-\mathrm{Z}) \\
&=\frac{1}{2}(H+Z)^{*}(D+E)^{*}-\frac{1}{2}(H-\mathrm{Z})^{*}(D-E)^{*} \\
&=(E H+D Z)^{*}.
\end{align*}
So we derive that $$
\left[\begin{array}{ll}
H & Z \\
Z & H
\end{array}\right]^{*}\left[\begin{array}{ll}
D & E \\
E & D
\end{array}\right]^{*}=\left[\begin{array}{ll}
D & E \\
E & D
\end {array}\right]\left[\begin{array}{ll}
H & Z \\
Z & H
\end {array}\right]
.$$
At last, we find that\\
$$
(H+Z)(D+E)^{k+1}=(D+E)^{k}, ~
(H-Z)(D-E)^{k+1}=(D-E)^{k}.\\
$$
So, $$\begin{array}{l}
(H+Z)(D+E)^{k+1}-(H-Z)(D-E)^{k+1}=(D+E)^{k}-(D-E)^{k} ,\\
(H+Z)(D+E)^{k+1}+(H-Z)(D-E)^{k+1}=(D+E)^{k}+(D-E)^{k},
\end{array}$$
i.e.
$$\begin{aligned}
&(Z D+H E)[(D+E)^{k}+(D-E)^{k}]+(Z E+H D)[(D+E)^{k}-(D-E)^{k}]\\
&=(D+E)^{k}-(D-E)^{k},\\
&(H D+Z E)[(D+E)^{k}+(D-E)^{k}]+(H E+Z D)[(D+E)^{k}-(D-E)^{k}]\\
&=(D+E)^{k}+(D-E)^{k}.
\end{aligned}$$
Evidently,
$$\begin{aligned}
&\frac{1}{2}(H D+Z E)\left[(D+E)^{k}-(D-E)^{k}\right]+\frac{1}{2}(H E+Z D)\left[(D+E)^{k}-(D-E)^{k}\right]\\
&=\frac{1}{2}\left[(D+E)^{k}+(D-E)^{k}\right],\\
&\frac{1}{2}(Z D+H E)\left[(D+E)^{k}-(D-E)^{k}\right]+\frac{1}{2}(Z E+H D)\left[(D+E)^{k}-(D-E)^{k}\right]\\
&=\frac{1}{2}\left[(D+E)^{k}-(D-E)^{k}\right].
\end{aligned}$$
With the application of Lemma \ref{An},
$$\left[\begin{array}{cc}
H & Z \\
Z & H
\end{array}\right]
\left[\begin{array}{ll}
D & E \\
E & D
\end{array}\right]^{k+1}=
\left[\begin{array}{ll}
D & E \\
E & D
\end{array}\right]^{k}.$$
In conclusion, $$
S^{\scriptsize\textcircled{\tiny\dag}}=\left[\begin{array}{ll}
H & Z \\
Z & H
\end{array}\right]
.$$
\end{proof}

\section{Methods for giving a solution of FLS} 
\qquad In this section,  new methods involving core-EP inverses for giving a solution of FLS are proposed. For the consistent and inconsistent FLS, we will study them separately. We now state the main theorems of this paper. $\mathcal {R}(X)$ denotes the columu space of a matrix $X$.
\begin{thm}
\label{core-EP inverse is solution}
$S^{\scriptsize\textcircled{\tiny\dag} }Y$ is a solution of the crisp linear system $SX=Y$ 
if and only if $Y\in \mathcal {R}(S^{k}),$ where $k=ind(S)$.
\end{thm}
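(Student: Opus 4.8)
The plan is to recast the solvability condition in operator terms and then identify the product $SS^{\scriptsize\textcircled{\tiny\dag}}$ as the relevant projector. The first observation is that $S^{\scriptsize\textcircled{\tiny\dag}}Y$ solves $SX=Y$ precisely when $S(S^{\scriptsize\textcircled{\tiny\dag}}Y)=Y$, i.e. when $SS^{\scriptsize\textcircled{\tiny\dag}}Y=Y$. Hence the whole statement reduces to showing that the fixed-point set of the map $Y\mapsto SS^{\scriptsize\textcircled{\tiny\dag}}Y$ is exactly $\mathcal{R}(S^{k})$. For this I would use only the defining properties $(1^{k})$ and $(2')$ from Definition \ref{defn}; the Hermitian condition $(3)$ is not needed here, since it only upgrades $SS^{\scriptsize\textcircled{\tiny\dag}}$ from an oblique to an orthogonal projector and does not affect its range. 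I would also use that $\operatorname{ind}(S)=k$ forces $\mathcal{R}(S^{k+1})=\mathcal{R}(S^{k})$.

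For sufficiency ($\Leftarrow$), I would first establish the identity $SS^{\scriptsize\textcircled{\tiny\dag}}S^{k}=S^{k}$. Rewriting $S^{k}=S^{\scriptsize\textcircled{\tiny\dag}}S^{k+1}$ by $(1^{k})$ and then applying $S(S^{\scriptsize\textcircled{\tiny\dag}})^{2}=S^{\scriptsize\textcircled{\tiny\dag}}$ from $(2')$, one computes $SS^{\scriptsize\textcircled{\tiny\dag}}S^{k}=S(S^{\scriptsize\textcircled{\tiny\dag}})^{2}S^{k+1}=S^{\scriptsize\textcircled{\tiny\dag}}S^{k+1}=S^{k}$. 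Consequently any $Y\in\mathcal{R}(S^{k})$, written as $Y=S^{k}w$, satisfies $SS^{\scriptsize\textcircled{\tiny\dag}}Y=S^{k}w=Y$, so $S^{\scriptsize\textcircled{\tiny\dag}}Y$ is indeed a solution.

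For necessity ($\Rightarrow$), suppose $SS^{\scriptsize\textcircled{\tiny\dag}}Y=Y$; then $Y\in\mathcal{R}(SS^{\scriptsize\textcircled{\tiny\dag}})$, and it is enough to check $\mathcal{R}(SS^{\scriptsize\textcircled{\tiny\dag}})\subseteq\mathcal{R}(S^{k})$. The key step is to iterate $(2')$: from $S^{\scriptsize\textcircled{\tiny\dag}}=S(S^{\scriptsize\textcircled{\tiny\dag}})^{2}$ one gets by induction $S^{\scriptsize\textcircled{\tiny\dag}}=S^{j}(S^{\scriptsize\textcircled{\tiny\dag}})^{j+1}$ for all $j\geq1$; taking $j=k$ yields $SS^{\scriptsize\textcircled{\tiny\dag}}=S^{k+1}(S^{\scriptsize\textcircled{\tiny\dag}})^{k+1}$, so that $\mathcal{R}(SS^{\scriptsize\textcircled{\tiny\dag}})\subseteq\mathcal{R}(S^{k+1})=\mathcal{R}(S^{k})$. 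Therefore $Y\in\mathcal{R}(S^{k})$, which closes this direction.

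I expect the necessity direction to be the main obstacle, specifically the need to pin down $\mathcal{R}(SS^{\scriptsize\textcircled{\tiny\dag}})=\mathcal{R}(S^{k})$ rather than merely $\mathcal{R}(SS^{\scriptsize\textcircled{\tiny\dag}})\subseteq\mathcal{R}(S)$; the inductive identity $S^{\scriptsize\textcircled{\tiny\dag}}=S^{k}(S^{\scriptsize\textcircled{\tiny\dag}})^{k+1}$ is precisely what bridges this gap. A cleaner coordinate-based route I would keep in reserve is to invoke the core-EP decomposition: there $SS^{\scriptsize\textcircled{\tiny\dag}}=U\left[\begin{smallmatrix} I & 0 \\ 0 & 0\end{smallmatrix}\right]U^{*}$ is visibly an idempotent whose range is the span of the first $\operatorname{rank}(S^{k})$ columns of $U$, and this span equals $\mathcal{R}(S^{k})$ because $N^{k}=0$ and $T$ is nonsingular. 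Since the fixed-point set of any idempotent $P$ is exactly $\mathcal{R}(P)$, both implications then follow simultaneously from the equivalence $SS^{\scriptsize\textcircled{\tiny\dag}}Y=Y \iff Y\in\mathcal{R}(SS^{\scriptsize\textcircled{\tiny\dag}})$.
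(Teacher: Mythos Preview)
Your argument is correct and follows essentially the same route as the paper. Both directions hinge on iterating the identity $(2')$ to write $SS^{\scriptsize\textcircled{\tiny\dag}}=S^{m}(S^{\scriptsize\textcircled{\tiny\dag}})^{m}$ for suitable $m$, and on the relation $S^{\scriptsize\textcircled{\tiny\dag}}S^{k+1}=S^{k}$ from $(1^{k})$; your derivation of $SS^{\scriptsize\textcircled{\tiny\dag}}S^{k}=S^{k}$ is in fact more explicit than the paper's, and your remark that condition $(3)$ plays no role, together with the alternative via the core-EP decomposition, are useful additions not present in the original proof.
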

\begin{proof}
$(\Leftarrow)$ Due to $Y\in \mathcal {R}(S^{k}) ~,$ we have $Y=S^{k}T$, where $T\in \mathbb{R}^{n \times 1}$. \\
According to the definition of core-EP inverse,\\
So,$$
SS^{\scriptsize\textcircled{\tiny\dag}}Y=SS^{\scriptsize\textcircled{\tiny\dag}}S^{k}T=S^{k}T=Y.$$
It turns out that $ S^{\scriptsize\textcircled{\tiny\dag}} Y $ is a solution of $SX=Y.$ \\
$(\Rightarrow)$ Since $S^{\scriptsize\textcircled{\tiny\dag}} Y$ is a solution of $SX=Y$, that $SS^{\scriptsize\textcircled{\tiny\dag}} Y=Y$.\\
From $S^{\scriptsize\textcircled{\tiny\dag}}=S(S^{\scriptsize\textcircled{\tiny\dag}})^{2},$\\
it follows that
$$
\begin{array}{l}
Y=S S^{\scriptsize\textcircled{\tiny\dag}} Y 
=S S(S^{\scriptsize\textcircled{\tiny\dag}})^{2} Y 
=S^{2}(S^{\scriptsize\textcircled{\tiny\dag}})^{2} Y  
=\cdots
=S^{k}(S^{\scriptsize\textcircled{\tiny\dag}})^{k} Y\end{array},
$$
which leads to $ Y\in \mathcal {R}(S^{k}).$
\end{proof}
\textbf{Proposed method 1}:\\
$(a)$ If $ind(S)=0$, then the crisp linear system $SX=Y$ is consistent and $X=S^{-1}Y$ is clearly the unique solution of $SX=Y$. \\
$(b)$ If $ind(S)\neq 0$ and $Y\in \mathcal {R}(S^{k})$, then the crisp linear system $SX=Y$ is consistent and $X=S^{\scriptsize\textcircled{\tiny\dag} }Y$ is a solution of $SX=Y$, see Theorem \ref{core-EP inverse is solution}.\\
\indent 
According to \cite[Theorem 2.5 and Definition 2.5]{Inconsistent fuzzy matrix equations},  if the crisp linear system $SX=Y$ (\ref{SX=Y}) is inconsistent, then the associated FLS $A \tilde{X}=\tilde{Y}$ (\ref{FLS}) is also inconsistent. Under this circumstances, we desire to find a generalized  solution of the inconsistent FLS $A \tilde{X}=\tilde{Y}$ (\ref{FLS}) by the following two approaches. \\
\indent\textbf{Proposed method 2}:\\
$({\rm i})$ Through solving the consistent crisp linear system 
\begin{equation}
\label{jiefa1} 
S X=S^{k}(S^{k})^{(1,3)} Y.
\end{equation} By Theorem \ref{core-EP inverse is solution}, \begin{equation}X=S^{\scriptsize\textcircled{\tiny\dag}} S^{k} (S^{k})^{(1,3)} Y=S^{\scriptsize\textcircled{\tiny\dag} }Y \end{equation} is a solution of the above crisp linear system.\\
\label{core-ep jie de xingshi}
$(\rm ii)$ Through solving the consistent crisp linear system
\begin{equation}\label{zhuanzhi system}
(S^{k})^{*} S X=(S^{k})^{*} Y.\end{equation}
By Theorem \ref{core-EP inverse is solution}, \begin{equation}X=[(S^k)^{*}S]^{\scriptsize\textcircled{\tiny\dag}}(S^{k})^{*} Y=S^{\scriptsize\textcircled{\tiny\dag} }Y\end{equation} is a solution of the above crisp linear system.
\begin {defn}
For the inconsistent FLS $A \tilde{X}=\tilde{Y}$ (\ref{FLS}), $X=S^{\scriptsize\textcircled{\tiny\dag} }Y$ is a solution of the crisp linear system (\ref{jiefa1}) (resp. (\ref{zhuanzhi system})), then associated fuzzy number vector $\tilde{X}$ is called a generalized solution of the FLS $A \tilde{X}=\tilde{Y}$ (\ref{FLS}).
\end {defn}
\section{Numerical examples}
\qquad Our aim in this section is to use some examples about obtaining a solution of the consistent and inconsistent FLS to illustrate the methods presented in this paper. All the numerical tasks have been performed by using Matlab R2019b.

\begin{eg}
Consider the following $2\times 2$ order consistent FLS:
\begin{equation}
\label{(EX2)}
\left[\begin{array}{cc}
-2& 1\\
4& -2 
\end{array}\right]
\left[\begin{array}{cc}
\tilde{x}_{1} \\
\tilde{x}_{2} 
\end{array}\right]=\left[\begin{array}{cc}
(-1+3r,~3-r)  \\
(-6+2r,~2-6r) \\
\end{array}\right]
.\end{equation}
\label{example 5.2}
The extended $4 \times 4$ order crisp linear system $SX=Y$ is 
\begin{equation}
\label{(EX21)}
\left[\begin{array}{cccc}
0 & 1 & 2 & 0 \\
4 & 0 & 0 & 2 \\
2 & 0 & 0 & 1 \\
0 & 2 & 4 & 0
\end{array}\right]\left[\begin{array}{cc}
\underline{x}_{1}  \\
\underline{x}_{2}  \\
-\ \overline{x}_{1}  \\
-\ \overline{x}_{2} 
\end{array}\right]=\left[\begin{array}{cc}
-1+3r  \\
-6+2r  \\
-3+r  \\
-2+6r
\end{array}\right]
.\end{equation}
Since $\operatorname{rank}(S)=\operatorname{rank}(S \mid Y)=2$, then the crisp linear system $SX=Y$ is consistent. Note that $ind(S)=1.$\\
By \textbf{Proposed method 1},  a solution of (\ref{(EX21)}) is given by
$$X=S^{\scriptsize\textcircled{\tiny\dag}}Y=S^{\scriptsize\textcircled{\tiny\#}}Y.$$
The core inverse of $S$ is
$$S^{\scriptsize\textcircled{\tiny\#}}=\left[\begin{array}{cccc}
 0.0000  &  0.1000  &  0.0500 &  0.0000\\
    0.1000     &    0.0000    &      0.0000  &   0.2000\\
    0.0500    &     0.0000      &    0.0000  &   0.1000\\
   0.0000   & 0.2000  &  0.1000  & 0.0000\\
\end{array}\right].
$$
Finally, a solution of the consistent linear system (\ref{(EX21)}) is $$X=
\left[\begin{array}{cc}
\underline{x}_{1}  \\
\underline{x}_{2}  \\
-\overline{x}_{1}  \\
-\overline{x}_{2} 
\end{array}\right]=\left[\begin{array}{cc}
 -0.7500 + 0.2500r\\
  -0.5000 + 1.5000r\\
  -0.2500 + 0.7500r\\
  -1.5000 + 0.5000r\\
\end{array}\right],$$
then the associated fuzzy number vector\
$$\begin{aligned}\tilde{X}=
\left[\begin{array}{cc}
\tilde{x}_{1} \\
\tilde{x}_{2}\\ 
\end{array}\right]=\left[\begin{array}{cccc}
(-0.7500 + 0.2500r, ~0.2500 -0.7500r)  \\
( -0.5000 + 1.5000r, ~1.5000 -0.5000r )    \\
\end{array}\right].\end{aligned}$$
is a solution of the FLS (\ref{(EX2)}).
\end{eg}
\begin{eg}
Consider the following $3\times3$ order consistent FLS:\\
\begin{equation}
\label{(EX3)}
\left[\begin{array}{ccc}
2 & 0 & 0 \\
-1 & 1 & 1 \\
-1 & -1 & -1
\end{array}\right]\left[\begin{array}{cc}
\tilde{x}_{1} \\
\tilde{x}_{2} \\
\tilde{x}_{3} 
\end{array}\right]=\left[\begin{array}{cc}
(4r,~-8)\\
(12-r,~-4-3r)\\
(8+r,~-8-r)
\end{array}\right].
\end{equation}
The extended $6 \times 6$ order crisp linear system $SX=Y$ is 
\begin{equation}
\label{(EX31)}
\left[\begin{array}{cccccc}
2 & 0 & 0 & 0 & 0 & 0\\
0 & 1 & 1 & 1 & 0 & 0\\
0 & 0 & 0 & 1 & 1 & 1\\
0 & 0 & 0 & 2 & 0 & 0\\
1 & 0 & 0 & 0 & 1 & 1\\
1 & 1 & 1 & 0 & 0 & 0
\end{array}\right]\left[\begin{array}{cccccc}
\underline{x}_{1}  \\ 
\underline{x}_{2}  \\
\underline{x}_{3}  \\
-\overline{x}_{1} \\ 
-\overline{x}_{2} \\
-\overline{x}_{3} \\
\end{array}\right]=\left[\begin{array}{cccccc}
4r     \\
12-r \\
8+r \\
8    \\
4 + 3r \\
8+r 
\end{array}\right].
\end{equation}
Note that $ind(S)=2$. We can get $$S^{2}=\left[\begin{array}{cccccc}
4 & 0 & 0 & 0 & 0 & 0\\
0 & 1 & 1 & 4 & 1 & 1\\
2 & 1 & 1 & 2 & 1 & 1\\
0 & 0 & 0 & 4 & 0 & 0\\
4 & 1 & 1 & 0 & 1 & 1\\
2 & 1 & 1 & 2 & 1 & 1
\end{array}\right].$$
Then, $Y=S^{2} \cdot\left[\begin{array}{ll}
r \\
2 \\
-5-r \\
2  \\
4  \\
3 
\end{array}\right],$ $Y\in \mathcal {R}(S^{2}).$\\
According to Theorem \ref{core-EP inverse is solution}, $X=S^{\scriptsize\textcircled{\tiny\dag} }Y$ is a solution of (\ref{(EX31)}).\\
So,
$$S^{\scriptsize\textcircled{\tiny\dag} }=\left[\begin{array}{cccccc}
 0.3750  & -0.1250 &   0.0000  &  0.1250 &   0.1250   & 0.0000\\
   -0.2500  &  0.2500  &  0.1250  &  0.0000 &  0.0000  &  0.1250\\
   -0.1250  &  0.1250  &  0.1250  & -0.1250  &  0.1250  &  0.1250\\
    0.1250  &  0.1250  & 0.0000  &  0.3750  & -0.1250  & 0.0000\\
   0.0000  &  0.0000   & 0.1250  & -0.2500  &  0.2500 &   0.1250\\
   -0.1250  &  0.1250  &  0.1250  & -0.1250  &  0.1250  &  0.1250
\end{array}\right].$$
Therefore, a solution of (\ref{(EX31)}) is
$$\begin{aligned}X=\left[\begin{array}{cccccc}
\underline{x}_{1}  \\ 
\underline{x}_{2} \\
\underline{x}_{3}  \\
 -\overline{x}_{1}  \\ 
 -\overline{x}_{2} \\
  -\overline{x}_{3} \\
 \end{array}\right]=\left[\begin{array}{cccc}
   2r \\
   5-r \\
   3 \\
  4 \\
   1 +r  \\
   3 
\end{array}\right],\end{aligned}$$
then the associated fuzzy number vector\\
$$\begin{aligned}\tilde{X}=
\left[\begin{array}{cc}
\tilde{x}_{1} \\
\tilde{x}_{2}\\
\tilde{x}_{3} 
\end{array}\right]=\left[\begin{array}{cccc}
(2r,~ -4)  \\
( 5-r,~-1-r)  \\
(3,~-3 ) 
\end{array}\right].\end{aligned}$$
is a solution of the FLS (\ref{(EX3)}).
\end{eg}
\begin{eg}
Consider the following $2\times2$ order inconsistent FLS :
\begin {equation}\label{inconsistent FLS}
\left[\begin{array}{cc}
-1 & 1 \\
-1 & 1
\end{array}\right]\left[\begin{array}{ll}
\tilde{x}_{1}  \\
\tilde{x}_{2} 
\end{array}\right]=\left[\begin{array}{cc}
(3,~ 2+r)  \\
(4, ~8r) 
\end {array}\right].
\end{equation}
The extended $4 \times 4$ order crisp linear system $SX=Y$ is 
\begin {equation}
\left[\begin{array}{cccc}\label{inconsistent SX=Y}
0 & 1 & 1 & 0 \\ 
0 & 1 & 1 & 0 \\ 
1 & 0 & 0 & 1 \\ 
1 & 0 & 0 & 1
\end{array}\right]
\left[\begin{array}{cc}
\underline{x}_{1} \\
 \underline{x}_{2}  \\
  -\ \overline{x}_{1}\\
   -\ \overline{x}_{2}
   \end{array}\right]
   =\left[\begin{array}{cc}
   3  \\ 
   4   \\ 
   -2-r \\ 
   -8r 
\end{array}\right].
\end {equation}
Since $\operatorname{rank}(S)=2,$ $\operatorname{rank}(S \mid Y)=4,$ $\operatorname{rank}(S)<\operatorname{rank}(S \mid Y)$, then the crisp linear system $SX=Y$ (\ref{inconsistent SX=Y}) is inconsistent and the FLS (\ref{inconsistent FLS}) is also inconsistent. Note that $ind(S)=2.$\\
$({\rm i})$ We consider the following consistent crisp linear system:
\begin{equation}
\label{(a)}
S X=S^{2}(S^{2})^{(1,3)} Y.
\end{equation}
The core-EP decomposition of $S$ is
$$S=U\left[\begin{array}{ll}T & S \\ 0 & N\end{array}\right] \boldsymbol{U}^{*},$$ 
where $$U=\left[\begin{array}{cccc}
0.5000  &  0.5000  &  0.7071 &  0.0000\\
0.5000  &  0.5000  & -0.7071  &  0.0000\\
0.5000  & -0.5000  &  0.0000  &  0.7071\\
0.5000  & -0.5000  &  0.0000  & -0.7071
\end{array}\right], T=\left[\begin{array}{ccc}
2.0000\\
\end{array}\right],$$
$$S=\left[\begin{array}{ccc}
0.0000 & 0.0000 & 0.0000 \\
\end{array}\right] and~
 N=\left[\begin{array}{ccc}
0.0000 & -1.4142 & 1.4142\\
0.0000 & 0.0000 & 0.0000\\
0.0000 & 0.0000 & 0.0000
\end{array}\right].$$
By \textbf{Algorithm}, 
$$
S^{\scriptsize\textcircled{\tiny\dag}}=\boldsymbol{U}\left[\begin{array}{cc}\boldsymbol{T}^{-1} & 0 \\ 0 & 0\end{array}\right] \boldsymbol{U}^{*}=\left[\begin{array}{cccc}
0.1250 & 0.1250 & 0.1250 & 0.1250 \\
0.1250 & 0.1250 & 0.1250 & 0.1250 \\
0.1250 & 0.1250 & 0.1250 & 0.1250 \\
0.1250 & 0.1250 & 0.1250 & 0.1250 
\end{array}\right]
.$$
According to \textbf{Proposed method 2}, a solution of (\ref{(a)}) is given by $$
X=S^{\scriptsize\textcircled{\tiny\dag}}Y
.$$ 
Therefore,
$$\begin{aligned}X&=\left[\begin{array}{cc}\underline{x}_{1} \\
\underline{x}_{2} \\ 
-\overline{x}_{1}  \\ 
-\overline{x}_{2} 
\end{array}\right]=\left[\begin{array}{cccc}
0.1250 & 0.1250 & 0.1250 & 0.1250 \\
0.1250 & 0.1250 & 0.1250 & 0.1250 \\
0.1250 & 0.1250 & 0.1250 & 0.1250 \\
0.1250 & 0.1250 & 0.1250 & 0.1250 
\end{array}\right]\left[\begin{array}{cc}
3  \\
 4 \\
-2-r \\
-8r 
\end{array}\right]\\
&=\left[\begin{array}{cccc}
0.6250-1.1250r   \\
0.6250-1.1250r \\
0.6250-1.1250r  \\
0.6250-1.1250r
\end{array}\right],\end{aligned}$$
then the associated fuzzy number vector\\
$$\begin{aligned}\tilde{X}=
\left[\begin{array}{cc}
\tilde{x}_{1} \\
\tilde{x}_{2}\\
\end{array}\right]=\left[\begin{array}{cccc}
(0.6250-1.1250r,~ -0.6250+1.1250r)  \\
(0.6250-1.1250r,~ -0.6250+1.1250r)  
\end{array}\right],\end{aligned}$$
is a generalized solution of the FLS (\ref{inconsistent FLS}).

$({\rm ii})$ We focus on the following consistent crisp linear system:
\begin{equation}\label{2}
(S^{2})^{*} S X=(S^{2})^{*} Y.\end{equation}
According to \textbf{Proposed method 2}, $$
X=S^{\scriptsize\textcircled{\tiny\dag}}Y=\left[\begin{array}{cccc}
0.1250 & 0.1250 & 0.1250 & 0.1250 \\
0.1250 & 0.1250 & 0.1250 & 0.1250 \\
0.1250 & 0.1250 & 0.1250 & 0.1250 \\
0.1250 & 0.1250 & 0.1250 & 0.1250 
\end{array}\right]$$ is a solution of (\ref{2}), then the associated fuzzy number vector\\
$$\begin{aligned}\tilde{X}=
\left[\begin{array}{cc}
\tilde{x}_{1} \\
\tilde{x}_{2}\\
\end{array}\right]=\left[\begin{array}{cccc}
(0.6250-1.1250r,~ -0.6250+1.1250r)  \\
(0.6250-1.1250r,~ -0.6250+1.1250r)  
\end{array}\right],\end{aligned}$$
is a generalized solution of the FLS (\ref{inconsistent FLS}).
\end{eg}
\begin{con*}
In the above context, we give a solution of the consistent FLS $A\tilde{X}=\tilde{Y}$ and a generalized solution of the inconsistent FLS $A\tilde{X}=\tilde{Y}$. It is natural to ask whether we can obtain the general solution of the FLS $A\tilde{X}=\tilde{Y}$ and this question will be our future research topic.
\end{con*}
\vspace{0.4cm} \noindent {\large\bf Acknowledgements}
This research is supported by the National Natural Science Foundation
of China (No.12001368) and sponsored by Shanghai Sailing Program (No.20YF1433100).

\end{document}